\DeclareMathOperator{\td}{td}  %transcendence degree
\DeclareMathOperator{\dom}{dom}   % Domain
\newcommand{\N}{\ensuremath{\mathbb{N}}}
\newcommand{\Z}{\ensuremath{\mathbb{Z}}}
\newcommand{\Q}{\ensuremath{\mathbb{Q}}}
\newcommand{\R}{\ensuremath{\mathbb{R}}}
\newcommand{\C}{\ensuremath{\mathbb{C}}}
\renewcommand{\phi}{\varphi}
\newcommand{\class}[2]{\ensuremath{\left\{ #1 \,\left|\, #2 \right.\right\}}}
\newcommand{\into}{\hookrightarrow}
\newcommand{\nstrong}{\ensuremath{\not\kern-4pt\lhd\;}} % nonstrong embedding
\newbox\noforkbox \newdimen\forklinewidth
\noforkbox\hbox{\lower 2pt\box1\lower
2pt\box0\relax}
\def\unionstick{\mathop{\copy\noforkbox}\limits}
\def\nonfork_#1{\unionstick_{\textstyle #1}}
\newbox\doesforkbox
\doesforkbox\hbox{\lower 2pt\box1 \lower
2pt\box2\lower2pt\box0\relax}
\def\nunionstick{\mathop{\copy\doesforkbox}\limits}
\def\fork_#1{\nunionstick_{\textstyle #1}}
\newtheorem{prop}{Proposition}
\newtheorem{cor}[prop]{Corollary}
\newtheorem{theorem}[prop]{Theorem}
\newtheorem{lemma}[prop]{Lemma}
\newtheorem{conjecture}[prop]{Conjecture}
\newtheorem{claim}[prop]{Claim}
\theoremstyle{definition}
\newtheorem{defn}[prop]{Definition}
\newtheorem{example}[prop]{Example}
\newcommand{\mK}{\mathcal{K}}
\newcommand{\ba}{\bar{a}}
\newcommand{\z}{\bar{z}}
\newcommand{\y}{\bar{y}}
\newcommand{\bt}{\bar{t}}
\newcommand{\bs}{\bar{s}}
\newcommand{\br}{\bar{r}}
\newcommand{\bu}{\bar{u}}
\newcommand{\bv}{\bar{v}}
\newcommand{\bq}{\bar{q}}
\newcommand{\be}{\bar{e}}
\newcommand{\bl}{\bar{l}}
\DeclareMathOperator{\img}{img}
\DeclareMathOperator{\scl}{scl}
\title{Real Closed Exponential Subfields of Pseudoexponential Fields}
\author{Ahuva C. Shkop \footnote{The author was generously supported by the Center for Advanced Studies in Mathematics at Ben-Gurion University of the Negev.}}
\date{}
\begin{document}

\maketitle

\begin{abstract}
In this paper, we prove that a pseudoexponential field has continuum many non-isomorphic countable real closed exponential subfields, each with an order preserving exponential map which is surjective onto the nonnegative elements.  Indeed, this is true of any algebraically closed exponential field satisfying Schanuel's conjecture.
\end{abstract}

\section{Introduction}

For many decades, the first order theory of complex exponentiation, i.e. the theory of $\C_{exp}:= \langle \C, +,\cdot,0,1,e^z \rangle$ has been very difficult to study and many questions stemming from model theory, geometry, and number theory remain open.  One of the most famous of these problems is the following conjecture from the 1960's due to Schanuel:

\begin{conjecture}\text{(Schanuel's Conjecture)} If $\{z_1,...,z_n \}\subset \C$, then 

$\td_\Q(z_1,...,z_n,e^{z_1},...,e^{z_n})$, where $\td_\Q$ is the transcendence
degree over $\Q$, is at least the
 $\Q$ linear dimension of $\{z_1,...,z_n\}.$

\end{conjecture}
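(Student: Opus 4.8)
The plan is, first and foremost, to recognize that this statement is one of the central open problems of transcendental number theory and that no unconditional proof is known; so rather than claim a proof I would lay out the only avenue with any track record, the method of auxiliary functions from transcendence theory, and be explicit about where it breaks down. The natural template is the one establishing the algebraic special case, the Lindemann--Weierstrass theorem: if $z_1,\dots,z_n$ are algebraic and $\Q$-linearly independent, then $e^{z_1},\dots,e^{z_n}$ are algebraically independent, which is exactly the inequality $\td_\Q(z_1,\dots,z_n,e^{z_1},\dots,e^{z_n})\ge \ldim_\Q\{z_1,\dots,z_n\}$ in the regime where the $z_i$ themselves contribute no transcendence degree. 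First I would set up an auxiliary polynomial $P$ in $2n$ variables with undetermined integer coefficients, form the entire function $F(t)=P(tz_1,\dots,tz_n,e^{tz_1},\dots,e^{tz_n})$, and use Siegel's lemma to choose the coefficients so that $F$ and many of its derivatives vanish to high order at a family of arithmetically structured points $t$ (for instance the integers $t=1,\dots,S$, at which $tz_i$ and $(e^{z_i})^t$ are algebraic of controlled height whenever the $z_i$ and $e^{z_i}$ are).

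Second, I would run the standard three-part argument: an upper bound showing the relevant values are algebraic of controlled height and degree; an arithmetic lower bound, the Liouville-type fact that a nonzero algebraic integer is bounded away from $0$ in norm; and an analytic upper bound via the maximum modulus principle and the imposed high-order vanishing, which together force additional vanishing and ultimately a contradiction with the assumed $\Q$-linear independence. To make the extrapolation step rigorous one needs a zero estimate controlling the order of vanishing of $F$ against its degree, in the style of the Masser--W\"ustholz multiplicity estimates on the commutative group variety $\ga^n\times\gm^n$.

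The hard part --- and the reason the conjecture remains open --- lies precisely in the arithmetic and analytic steps in the genuinely transcendental case. When the $z_i$ are themselves transcendental there is no field of algebraic numbers to which a Liouville inequality applies, so the arithmetic lower bound simply disappears, and no substitute is known that handles the additive data $z_i$ and the multiplicative data $e^{z_i}$ simultaneously on the full group $\ga^n\times\gm^n$. Baker's theory of linear forms in logarithms controls the rank-one multiplicative relations, and W\"ustholz's analytic subgroup theorem extends this to commutative group varieties, but these yield only linear-independence (rank) conclusions, not the algebraic-independence content that Schanuel's conjecture demands. Consequently I would not expect this plan to terminate in a proof: realistically it reproduces the known special cases, and the conjecture is instead to be \emph{assumed} as a hypothesis in what follows --- exactly as the abstract signals when it restricts attention to ``any algebraically closed exponential field satisfying Schanuel's conjecture.''
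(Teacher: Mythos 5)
You are correct: this statement is Schanuel's Conjecture, a famous open problem of transcendental number theory, and the paper offers no proof of it --- it is stated purely as a hypothesis, built into the Schanuel property (axiom 4) of pseudoexponential fields and assumed for the fields the paper works with. Your conclusion that the conjecture is to be assumed rather than proved, and your survey of why the auxiliary-function machinery cannot currently reach it, matches the paper's treatment exactly.
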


In 2001, Zilber combined this and many other open questions into one intriguing conjecture.  In \cite{P-exp}, Zilber constructs a class of exponential fields known as pseudoexponential fields.  A pseudoexponential field, $K$, satisfies the following six properties:

\begin{enumerate}
\item $K$ is an algebraically closed field of characteristic zero.
\item $\exp$ is a surjective homomorphism from the additive group of $K$ onto the multiplicative group of $K$.
\item There is some transcendental $\nu$ so that $\ker(\exp_K) = \nu\Z$.
\item Schanuel property: If $a_1,\dots,a_n \in K$ are $\Q$-linearly independent, then $td_\Q(a_1,\dots,a_n, \exp(a_1),...,\exp(a_n))\geq n$. (note: This is equivalent to Schanuel's conjecture for $K$)
\item Exponential Closure:  We need the following definitions to state this property but we will not refer to them for the remainder of the paper.  Let $\alpha \in \N$ and  $G_\alpha(K):= K^\alpha \times (K^*)\alpha$. For $[C]=(c_{i,j})$ an $r \times \alpha$ matrix of integers, let $[C]:G_\alpha(K) \to G_r(K)$ be the function which acts additively on the first $\alpha$ coordinates and multiplicatively on the last $\alpha$ coordinates, i.e. $[C](\z,\y)= (u_1,...,u_r,v_1,...,v_r)$ where $$u_i = \sum_{j=1}^\alpha c_{i,j}z_j  \text{  and  }  v_i =
\prod_{j=1}^\alpha y_j^{c_{i,j}}.$$   An irreducible Zariski closed $V \subseteq K^\alpha \times (K^*)^\alpha $ is \emph{rotund} if $\dim([C](V))\geq r$ for any $r \times \alpha$ matrix of integers $C$ of rank $r$ where $1\leq r\leq \alpha$.  We say $V$ is \emph{free} if it is not contained in a closed set given by equations of the form $$\{(\bu,\bv): \prod_{i=1}^\alpha v_i^{m_i}=b\}$$ or
$$\{(\bu,\bv):\sum_{i=1}^\alpha m_iu_i=b\}$$ for any $m_1,...,m_\alpha \in \Z$ and $b \in K$.

\medskip

Given these definitions, the exponential closure property can be stated as follows:

\medskip

If $V\subseteq K^\alpha \times (K^*)^\alpha $ is irreducible, rotund, and free, then for any finite $A \in K$ there is $(a_1,\dots,a_\alpha, \exp(a_1),\dots,\exp(a_\alpha))\in V$ a generic point in $V$ over $A$.

\item Countable Closure: We will state this property in terms of the schanuel predimension $\delta$.  For finite $X \subset K$, let $$\delta(X):= td_\Q(X,\exp(X))- \Q \text{-l.d.}(X)$$

    \noindent where $\Q$-l.d.$(X)$ is the $\Q$-linear dimension of the span of $X$. $\delta$ is a predimension.  Notice that the Schanuel property implies that $\delta(X) \geq 0$.  Therefore, the following is always defined:
    $$d(X) = \min\{\delta (Y): \text{ Y is finite and } X \subseteq Y \subseteq K \}$$

    \noindent We can now define the Schanuel closure of any set $S\subseteq K$: $$\scl(S) = \{y \in K : \exists X \subseteq_{fin} S, \delta(Xy)=\delta(X)\}.$$

    \medskip

    \noindent Then countable closure states that the Schanuel closure of a finite set is countable.

 \end{enumerate}

\noindent \emph{Note: Schanuel closure gives a pregeometry on K.}(For the definition of pregeometry, see \cite{Dave's Book})
\medskip

\noindent  These axioms classify pseudoexponential fields.  In \cite{P-exp}, Zilber proved the following:

\begin{theorem}\text(Zilber) For $\kappa$ uncountable, there is a unique pseudoexponential field of size $\kappa$ and it has $2^k$ isomorphisms. Furthermore, pseudoexponential fields are quasiminimal, i.e. every definable subset of a pseudoexponential field is countable or co-countable.
\end{theorem}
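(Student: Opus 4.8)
The plan is to exhibit the class of exponential fields satisfying axioms (1)--(6) as a \emph{quasiminimal excellent class} in the sense of Zilber and then to apply the general categoricity theorem for such classes. The pregeometry I would use is the one given by the Schanuel closure $\scl$ of axiom (6): the remark following the axioms records that $\scl$ is a pregeometry, and countable closure says precisely that $\scl$ of a finite set is countable. These supply two of the four quasiminimal-excellence axioms, so the real work is to produce the required homogeneity and amalgamation from the remaining axioms.

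For $\aleph_0$-homogeneity and uniqueness of the generic type I would use the exponential closure axiom (5). Suppose two finite tuples realise the same quantifier-free type over a common $\scl$-closed base. To extend a partial isomorphism by one new generic element, one writes down the irreducible rotund free variety cut out by the locus of that element together with its exponential over the base; axiom (5) then produces a generic point of the required shape $(a_i,\exp(a_i))$ realising it, while the Schanuel property (4) guarantees $\delta \geq 0$ throughout, so the predimension bookkeeping never forces a collapse and the back-and-forth never stalls. This yields $\aleph_0$-homogeneity over $\scl$-closed sets.

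The main obstacle is \emph{excellence}: the amalgamation of independent systems of countable closed submodels, equivalently the assertion that the type of a generic tuple over the union of such a system is determined by its restrictions to the pieces. This is where the genuinely number-theoretic content enters, through Zilber's \emph{Thumbtack Lemma}, a Kummer-theoretic statement controlling the algebraic relations among compatible systems of roots of unity and of roots of prescribed elements relative to the kernel $\nu\Z$. Granting excellence, the abstract categoricity theorem for quasiminimal excellent classes provides a unique model in each uncountable cardinal $\kappa$, which is the first assertion.

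It remains to read off the two corollaries from strong homogeneity of the unique model $K_\kappa$ of size $\kappa$. Fixing a maximal $\scl$-independent set $I$ of size $\kappa$, every element of which realises the unique generic type over the empty set, any permutation of $I$ is a partial isomorphism and so extends to an automorphism of $K_\kappa = \scl(I)$; distinct permutations restrict differently to $I$, giving $2^\kappa$ automorphisms, while a structure of size $\kappa$ admits no more. For quasiminimality, let $D$ be a definable subset, defined over a finite set $A$. By $\aleph_0$-homogeneity any two elements outside $\scl(A)$ have the same type over $A$, so $D$ either contains all of them or none; in the first case $K_\kappa \minus D \subseteq \scl(A)$ is countable, and in the second $D \subseteq \scl(A)$ is countable.
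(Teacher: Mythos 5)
You cannot be checked against an internal argument here, because the paper does not prove this statement at all: it is Zilber's categoricity theorem, quoted from the cited paper of Zilber (\cite{P-exp} in the bibliography) purely as background, and nothing in Sections 2--4 depends on it beyond motivation. Judged as a reconstruction of Zilber's own proof, your outline has the right architecture: exhibit the class as quasiminimal excellent with respect to the $\scl$-pregeometry of axiom (6), invoke the abstract categoricity theorem for quasiminimal excellent classes, and then read off the $2^\kappa$ automorphisms (the paper's ``$2^k$ isomorphisms'' is a typo for this) and quasiminimality from homogeneity together with countable closure, exactly as in your last paragraph; those two deductions are correct as written.

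There is, however, a genuine gap in your homogeneity step. You attribute $\aleph_0$-homogeneity to axiom (5) plus Schanuel bookkeeping, reserving the Kummer-theoretic thumbtack lemma for excellence alone. But axiom (5) only gives \emph{existence} of a generic point of the rotund free variety over the base; it does not show that any two such generic points have the same type, which is what extending the partial isomorphism requires. A point $(a_1,\dots,a_\alpha,\exp(a_1),\dots,\exp(a_\alpha))$ carries with it the full compatible system of roots $\exp(a_i/m)$ for all $m\geq 1$, and matching the field-theoretic locus over the base does not automatically match this divisibility data against that of the element you are trying to hit. The thumbtack lemma is precisely the statement that finitely many algebraic conditions pin down such a system of roots, so in Zilber's proof it is already indispensable for the back-and-forth (uniqueness of the generic extension type over closed sets), not only for the amalgamation of independent systems. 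So your claim that ``the back-and-forth never stalls'' is unjustified as written; the stalling point is exactly where the Kummer theory enters. (Separately: by later work of Bays, Hart, Hyttinen, Kes\"al\"a and Kirby, excellence is automatic for quasiminimal classes, so your third paragraph can in principle be deleted from a modern write-up, whereas the Kummer-theoretic input to homogeneity cannot.)
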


This leads to the following question:  Is $\C_{exp}$ the unique pseudoexponential field of size continuum? Zilber conjectured that $\C_{exp}$ is indeed the pseudoexponential field of size $2^{\aleph_0}$.  It is clear that $\C_{exp}$ satisfies properties 1,2, and 3. In \cite{P-exp}  Zilber proved that $\C_{exp}$ satisfies countable closure. This paper explores a fundamental consequence of Zilber's conjecture.

From this point on, let $\mK$ be a fixed pseudoexponential field of size $\kappa$. If $\kappa = 2^{\aleph_0}$ and  $\mK$ is isomorphic to $\C_{\exp}$, then $\mK$ contains an exponential subfield isomorphic to $\R_{\exp}$.  Motivated by this observation, we will prove the following theorem:

\begin{theorem}
Let K be any algebraically closed exponential field satisfying Schanuel's conjecture (such as the pseudoexponential field $\mK$). Then there are continuum many non-isomorphic (as fields) countable real closed exponential subfields of $K$, each with an order-preserving exponential map which is surjective onto the non-negative elements.
\end{theorem}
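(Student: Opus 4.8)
The plan is to build each of the desired subfields by a countable inductive closure process, and to use Schanuel's property at every stage to guarantee that the partial structure constructed so far remains \emph{formally real}. Concretely, I would fix a real closed subfield $R_0$ of the algebraic closure of $\Q$ in $K$ (thereby fixing an ordering on the base) and construct an increasing chain $R_0 \subseteq R_1 \subseteq \cdots$ of countable subfields whose union $F = \bigcup_n R_n$ is (a) real closed, (b) closed under the ambient $\exp$, and (c) such that every positive element of $F$ lies in $\exp(F)$. At odd stages I would adjoin $\exp(a)$ for every $a$ accumulated so far; at even stages I would adjoin a chosen logarithm $b$ (a preimage under $\exp$) of every positive element, and then pass to the real closure. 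Iterating these operations countably often yields $F$ with $\exp\colon F \to F^{>0}$ surjective, provided each step can be carried out inside a formally real field.

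Positivity and order-preservation are then almost automatic, and the real content is isolated in a single lemma. Since $F$ is a field closed under $\exp$, every value $\exp(x) = (\exp(x/2))^2$ is a square, hence nonnegative in \emph{any} ordering of $F$; being nonzero, $\exp(F) \subseteq F^{>0}$. The unique ordering of the real closed field $F$ makes $\exp$ order-preserving once one checks $\exp(t) > 1$ for $t > 0$, which I would arrange by choosing, at each logarithm step, the preimage $b$ of a positive element $a$ so that $b$ and $a-1$ have matching signs. Everything then rests on the assertion that \emph{adjoining such an $\exp(a)$ or a logarithm $b$ to a finitely generated formally real subfield of $K$ cannot force $-1$ to become a sum of squares}. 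This is exactly where Schanuel's property enters: it bounds below the transcendence degree of the newly generated configuration, so that the only algebraic relations among the arguments and their exponentials are the trivial additive and multiplicative ones; a field generated over $\Q$ by algebraically independent transcendentals together with these ``expected'' relations embeds into $\R$ and is therefore formally real, so an ordering survives each extension.

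To obtain \emph{continuum many} fields I would introduce branching into the construction. At infinitely many stages the exponential-closure axiom supplies a generic point of a rotund free variety, and Schanuel's property leaves genuine freedom in the sign and the $\Q$-cut of the transcendental element selected; encoding a binary sequence $s \in 2^{\N}$ into these choices produces a family $\{F_s : s \in 2^{\N}\}$ of countable real closed exponential subfields. Because a real closed field carries a \emph{unique} ordering (its positive cone is the set of squares), every field isomorphism $F_s \to F_{s'}$ is automatically an isomorphism of ordered fields, so it suffices to separate the $F_s$ as ordered fields. I would do this by reading off from each $F_s$ an order-invariant that recovers $s$ --- for instance, recording for each $n$ which of two prescribed $\Q$-definable cuts the $n$-th generic element fills, a datum preserved by any ordered-field isomorphism. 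Distinct sequences then give non-isomorphic fields.

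The main obstacle is the formal-reality lemma at the heart of the second paragraph: one must show, using only Schanuel's property, that no finite iteration of ``take an exponential, take a logarithm, take the real closure'' introduces a relation witnessing $-1 = \sum \square$. The difficulty is that logarithms are not uniquely determined --- the fibre of $\exp$ over a point is a coset of the transcendental kernel --- so the argument must produce, at each step, a \emph{particular} preimage that is simultaneously transcendence-generic (to invoke Schanuel) and of the correct sign (to keep $\exp$ increasing), and it must verify that these local choices can be made coherently along the whole countable chain. Secondarily, one must confirm that the branching genuinely yields $2^{\aleph_0}$ pairwise non-isomorphic fields rather than collapsing; making the chosen cut-invariant provably isomorphism-invariant, and independent of the many auxiliary choices in the closure procedure, is the delicate point there.
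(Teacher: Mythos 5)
Your plan is, in substance, the paper's own proof: an alternating tower that adjoins exponentials of everything so far, then logarithms of all positive elements, then real closures; Schanuel's condition used at each step to show the newly adjoined exponentials (resp.\ logarithms) are algebraically independent over the stage already built, so that each extension is purely transcendental and formal reality survives; order-preservation arranged by choosing where the new transcendentals sit in the ordering; and non-isomorphism detected through cuts over $\Q$, using the fact that a real closed field has a unique ordering, so any field isomorphism is automatically order-preserving. (One small simplification: you do not need the fields to embed into $\R$ --- purely transcendental extensions of formally real fields are formally real outright, and your fields need not be archimedean.)

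There are, however, two places where your plan as written would need repair. First, you invoke the exponential-closure axiom (generic points of rotund free varieties) to drive the branching. That axiom is not among the hypotheses: the theorem assumes only that $K$ is an algebraically closed exponential field satisfying Schanuel, so exponential closure is unavailable --- and also unnecessary. Since Schanuel already makes each adjoined exponential or logarithm transcendental over the previous stage, the ring it generates is a polynomial ring, which can be ordered so as to place that element in \emph{any} prescribed positive transcendental cut over $\Q$; the branching lives entirely in this choice of ordering (equivalently, in which real closure inside $K$ one takes), not in a supply of new generic points. Second, your non-isomorphism scheme --- prescribing for each $n$ a pair of cuts and recovering the sequence $s$ from which one is filled --- is stronger than needed and runs into exactly the well-definedness problem you flag yourself: nothing prevents a later stage of the construction of $F_s$ from accidentally realizing the cut coding $1-s(n)$, destroying the decoding. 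The paper sidesteps this with a counting argument that never recovers $s$: every positive transcendental cut over $\Q$ is realized in some construction; any single countable field realizes only countably many cuts; and the set of realized cuts is a field-isomorphism invariant. Hence the isomorphism relation partitions a continuum-sized family of constructions into countable classes, leaving continuum many isomorphism types. With those two adjustments your outline becomes the paper's argument.
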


We prove this theorem in two steps, first constructing real closed exponential fields where the exponential map is not surjective and then showing how to construct them so that every positive element is in the image of the exponential map.  It is easier to see how this construction works in two steps, rather than one, and the results of the first construction are more examples of real closed exponential subfields of $\mK$.

\noindent Throughout this paper, we make use of the following notation and convention:

\medskip

- We use the tuple notation to denote a finite subset. i.e. $\bt \subset T$ is some finite set $t_1,...,t_n$ in $T$.

- For any set $A$, we  write $\langle A \rangle  $ for the $\Q$- additively linear span of $A$.

- For any set $A$, we  write $[A]$ to mean the subring of $\mK$ generated by $A$

- For any integral domain $R$,  $R^{alg}$ is the field theoretic algebraic closure of $R$.  Throughout this paper, the term "algebraic" refers to the field-theoretic notion.

- For a set $A$, $\Q$-l.d.$(A)$ is the $\Q$-linear dimension of $\langle A \rangle$.

- For any set $A$, we write $\exp(A)$ for the set $\{\exp(a): a\in A\}$.

- $\Q^{rc}$ is the real closure of the rational numbers, or equivalently, the real algebraic numbers.

- For $R$ an ordered ring, we write $R^{>0}$ for $\{r\in R : r>0\}$.

- We say $b_1,...,b_n$ are $\Q$-linearly dependent over $X$ if $\exists q_1,...,q_n \in \Q$, not all zero, such that $q_1b_1+\cdots +q_nb_n \in \langle X\rangle$.  We say  $b_1,...,b_n$ are $\Q$-multiplicatively dependent over $X$ if $\exists q_1,...,q_n \in \Q$, not all zero, such that $b_1^{q_1}\cdots b_n^{q_n}$ is in the multiplicative span of $X$. Unless we specify that we are referring to a multiplicative linear space, the word linear will mean additively linear.

- For a finite set $\bs$, we write $\td(\bs)$ to mean $\td(\Q(\bs)/\Q)$.

\medskip

We also make use of the following elementary facts about exponential functions:

\begin{itemize}
\item If $b \in \langle X\rangle  $ then $\exp(b)$ is algebraic over $\exp(X)$.
\item Suppose $b_1,...,b_n$ are $\Q$-linearly dependent over $X$.  Then $\exp(b_1,...,b_n)$ is $\Q$- multiplicatively dependent over $\exp(X)$.
\end{itemize}

\bigskip

\bigskip

\noindent I began this work as part of my thesis and I would like to thank my thesis advisor, David Marker, for all of his encouragement and support.

\section{Free Extensions and Formally Real Fields}

We begin with the following definitions.

\begin{defn}
In this paper, a \emph{(total) E-ring} is a $\Q$-algebra $R$ with
  no zero divisors, together with a homomorphism
  $\exp: \langle R,+ \rangle \to \langle R^*,\cdot \rangle$.

  A \emph{partial E-ring} is a $\Q$-algebra $R$ with no zero
  divisors, together with a $\Q$-linear subspace $A(R)$ of $R$ and a
  homomorphism $\exp_R: \langle A(R),+ \rangle \to \langle R^*,\cdot \rangle$. $A(R)$ is then the domain of $\exp_R$.

  An  \emph{E-field} is an E-ring which is a field.

  We say \emph{$S$ is a partial E-ring extension of $R$} if $R$ and $S$ are partial E-ring, $R\subseteq S$, and for all $r\in A(R)$,
  $\exp_S(r)=\exp_R(r)$.

\smallskip

\noindent When there is no ambiguity, we drop the subscript.
\end{defn}

The following example is an important subtlety with regards to the definition of partial E-ring extension.

\begin{example}Let $S$ be a partial E-ring.  If one considers $R = S$ and $A(R)\subsetneq A(S)$ a $\Q$-subspace of $A(S)$, then $S$ is a (proper) partial E-ring extension of $R$.
\end{example}

\begin{defn} Let $R$ be a partial E-ring.  We say $R' \supseteq R$ is a \emph{free partial E-ring extension of $R$} if
\begin{itemize}
\item $R'$ is a partial E-ring extension of $R$.
\item The domain of $\exp_{R'}$ contains $R$.
\item If $\{a_1,...,a_n\} \subset R$ is $\Q$-linearly independent over $A(R)$, then 

$\{\exp(a_1),...,\exp(a_n)\}\subset R'$ is algebraically  independent over $R$.
\item There is no proper partial E-subring of $R'$ satisfying these conditions.
\end{itemize}

\end{defn}

It is worth noting at this point that the fourth condition implies that $A(R')= R$. The next lemma easily follows from equivalent constructions in
\cite{Lou E-rings},\cite{Angus E-rings}.

\begin{lemma}\label{Unique Free}
Given any partial E-ring $R$, there is a free partial E-ring extension $R'$ of $R$.  Furthermore, if $R'$ and $S'$ are free partial E-ring extensions of $R$, then $R' \cong S'$.
\end{lemma}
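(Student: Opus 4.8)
The plan is to realize the free extension explicitly as a group algebra and then to prove uniqueness by showing that every free extension is isomorphic to this canonical model. First I would fix a $\Q$-linear complement $W$ of $A(R)$ in $R$, so that $R = A(R) \oplus W$ as $\Q$-vector spaces, and form the group algebra $R' := R[W]$: the free $R$-module with basis $\{x^w : w \in W\}$ and multiplication determined by $x^w x^{w'} = x^{w+w'}$ and $x^0 = 1$. Since $W$ is torsion-free abelian and $R$ is a domain, $R[W]$ is again a domain, and each $x^w$ is a unit with inverse $x^{-w}$. I would then set $A(R') := R$ and define $\exp_{R'}(a + w) := \exp_R(a)\,x^w$ for $a \in A(R)$ and $w \in W$; a direct check shows this is a homomorphism from $\langle R, + \rangle$ to $\langle (R')^*, \cdot \rangle$ extending $\exp_R$, so $R'$ is a partial E-ring extension of $R$ whose exponential has domain $R$.

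Next I would verify the two remaining conditions. For algebraic independence, suppose $a_1, \dots, a_n \in R$ are $\Q$-linearly independent over $A(R)$; writing $a_k = \alpha_k + w_k$ with $\alpha_k \in A(R)$ and $w_k \in W$, the hypothesis says the $w_k$ are $\Q$-linearly independent in $W$, so the map $\Z^n \to W$ sending $(m_1, \dots, m_n) \mapsto \sum_k m_k w_k$ is injective. Hence distinct monomials in the $\exp(a_k) = \exp_R(\alpha_k)\,x^{w_k}$ involve distinct basis elements $x^{\sum_k m_k w_k}$ and are therefore $R$-linearly independent, giving algebraic independence over $R$. Minimality is then immediate: any partial E-subring satisfying the first three conditions must contain $R$ together with every $\exp(w) = x^w$, and these generate all of $R[W]$.

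For uniqueness, let $R''$ be any free partial E-ring extension of $R$; as noted after the definition, $A(R'') = R$. I would define $\phi : R[W] \to R''$ by $\phi\restrict{R} = \mathrm{id}$ and $\phi(x^w) = \exp_{R''}(w)$. Because $\exp_{R''}$ is a homomorphism, the defining relations of the group algebra are respected, so $\phi$ is a well-defined $R$-algebra homomorphism commuting with the exponentials. Surjectivity follows from the minimality of $R''$ applied to the image $\phi(R[W])$, which is visibly a partial E-subring satisfying conditions one through three. Injectivity is the one point requiring care: given a putative nonzero kernel element, a finite $R$-combination of basis vectors $x^w$, I would collect the finitely many exponents $w$ into a common $\Z$-basis $f_1, \dots, f_k$, which is automatically $\Q$-linearly independent over $A(R)$; condition three then makes $\exp_{R''}(f_1), \dots, \exp_{R''}(f_k)$ algebraically independent over $R$, so the distinct Laurent monomials appearing in the image are $R$-linearly independent and the image cannot vanish. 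Thus $\phi$ is an isomorphism of partial E-rings over $R$, and applying this to both $R'$ and $R''$ gives $R' \cong R''$.

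The only genuine obstacle is conceptual rather than computational: since $\exp$ must be total on the $\Q$-vector space $R$, its values have to admit arbitrary rational powers, which is exactly why the group algebra on the $\Q$-vector space $W$---rather than a Laurent polynomial ring in a basis of $W$---is the correct object, and why injectivity must be reduced to a $\Z$-basis of the finitely many exponents occurring in a given element before condition three can be invoked. I would also cite the standard fact that the group algebra of a torsion-free abelian group over an integral domain is an integral domain.
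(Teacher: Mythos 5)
Your proof is correct, but it takes a genuinely different route from the paper. The paper works inside a large algebraically closed field extension $F$ of $R$: it picks a $\Q$-basis $\{b_i\}$ of $R$ over $A(R)$, chooses elements $d_{i,q}\in F$ with $\{d_{i,1}\}$ algebraically independent over $R$ and $d_{i,q}^s=d_{i,qs}$ (coherent systems of rational roots), defines $\exp$ on $qb_i$ by $d_{i,q}$, and takes $R'$ to be the subring generated by $R$ and the $d_{i,q}$; uniqueness is then obtained by extending the correspondence $d_{i,q}\mapsto \widehat{d}_{i,q}$ to an automorphism of an ambient algebraically closed field fixing the algebraic closure of $R$ and restricting it to $R'$. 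You instead build the extension intrinsically as the group algebra $R[W]$ on a $\Q$-linear complement $W$ of $A(R)$, and prove uniqueness by the universal property: the map $x^w\mapsto \exp_{R''}(w)$ is a well-defined $R$-algebra homomorphism, surjective by minimality, and injective because finitely many exponents sit inside a free abelian group whose $\Z$-basis is $\Q$-linearly independent over $A(R)$, so the freeness condition applies. The paper's approach gets the no-zero-divisors and algebraic-independence conditions for free (everything lives in a field, and the $d_{i,1}$ are chosen independent), but it needs an ambient field containing both copies and a Steinitz-style automorphism extension --- a step the paper states rather loosely; your approach avoids any ambient field and any choice of basis of $R$ over $A(R)$, at the cost of invoking the standard fact that the group ring of a torsion-free abelian group over an integral domain is a domain, and it makes the verification that the isomorphism respects the exponential completely mechanical. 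Both constructions produce the same object (the paper's $R'$ is exactly your $R[W]$ written in coordinates), so the difference is in the proof technique rather than the result.
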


\begin{proof}
 Let $F$ be a large algebraically closed field extension of $R$. Let
  
  $\class{b_i}{i\in I}$ be a $\Q$-basis of $R$ over $A(R)$, and for
  each $i \in I$ and $q \in \Q$, choose $d_{i,q} \in F$ such
  that $\class{d_{i,1}}{i\in I}$ is algebraically independent over $R$, and for all $s \in \Z$, $d_{i,q}^s =
  d_{i,qs}$.

  Extend $\exp_R$ to $\exp_{R'}$ by defining $\exp_{R'}(qb_i) =
  d_{i,q}$ for $q \in \Q$ and $i \in I$, and extending additively. Let
  $R'$ be the subring of $F$ generated by $R$ and all the
  $d_{i,q}$. Clearly $R'$ is a free partial E-ring extension of $R$

Let $S'$ be a different free partial E-ring extension of $R$. Consider $\widehat{d}_{i,q} \in S'$ where $\exp_{S'}(qb_i)=d_{i,q}$.  At this point note that the set $\{\widehat{d}_{i,1}: i\in I\}$ is algebraically independent over $R$ since $S'$ is a free partial E-ring extension of $R$. The subring of  $S'$ generated by $R$ and $\widehat{d}_{i,q}$ is also a partial E-ring extension of R.  By minimality of free partial E-ring extensions, $S'$ must be generated as a ring by $R$ and $\widehat{d}_{i,q}$. Our claim is that $S'$ is isomorphic to $R'$.  Consider the ring homomorphism $\phi : R' \to S'$ defined by $$\phi(qb_i) = qb_i \text{ and } \phi (d_{i,q}) = \widehat{d}_{i,q}.$$  Consider an algebraically closed field containing both $S'$ and $R'$.  Then, there is an automorphism of this algebraically closed field which fixes the algebraic closure of $R$, sends $d_{i,1}$ to $\widehat{d}_{i,1}$ and sends any coherent system of roots of $d_{i,1}$ to any coherent system of roots of $\widehat{d}_{i,1}$. Thus $\phi$ extends to an automorphism of this algebraically closed field. If we restrict this automorphism to $R'$, the image is $S'$.  It is easy to check that $\phi$ preserves the exponential map. Thus, $S'$ is isomorphic to $R'$ as partial E-ring.

\end{proof}

For any given partial E-ring, we use the prime notation to denote the free extension, i.e. if R is a partial E-ring, $R'$ is the free partial E-ring extension of R. We now connect free extensions to formally real fields via this next lemma.

\begin{lemma}
Suppose $R$ is a formally real partial E-ring.  Then, $R'$ is formally real.
\end{lemma}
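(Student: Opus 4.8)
The plan is to exhibit a \emph{single} free partial E-ring extension of $R$ that visibly sits inside a formally real field; by the uniqueness clause of Lemma~\ref{Unique Free}, together with the fact that formal reality is preserved under isomorphism, this suffices. The only place where formal reality is in danger is the adjunction of the coherent roots $d_{i,q}$: if these were roots of elements that are negative in some ordering, one could introduce a square root of $-1$ and destroy orderability. The entire strategy is therefore arranged so that the $d_{i,q}$ are \emph{positive} roots of \emph{positive} elements, taken inside a real closed ambient field.

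First I would pass to the fraction field $L$ of $R$. Since $R$ is a domain in which $-1$ is not a sum of squares, $L$ is a formally real field, so it carries an ordering and hence has a real closure; fix a real closed $L^{rc} \supseteq L$. Next, inside a real closed field $N \supseteq L^{rc}$ of sufficiently large transcendence degree (for instance the real closure of $L^{rc}$ adjoined $|I|$ indeterminates), I would pick the transcendence base of the free extension to consist of positive elements. Let $\{b_i : i \in I\}$ be a $\Q$-basis of $R$ over $A(R)$, as in Lemma~\ref{Unique Free}. Choose $t_i \in N$, for $i \in I$, algebraically independent over $L^{rc}$ and with each $t_i > 0$; this is possible because negating the members of an algebraically independent family preserves algebraic independence while letting us fix all signs to be positive, and independence over $L^{rc}$ forces independence over $R$. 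Because $N$ is real closed, each positive $t_i$ has a unique positive $n$th root $r_{i,n}$ for every $n \ge 1$, and these are coherent, $r_{i,mn}^m = r_{i,n}$, since both sides are positive with equal $n$th powers. Setting $d_{i,1} = t_i$ and $d_{i,m/n} = r_{i,n}^m$ then yields, for each $i$, a homomorphism $q \mapsto d_{i,q}$ from $\langle\Q,+\rangle$ into the multiplicative group of $N$ with $d_{i,q}^s = d_{i,qs}$, while $\{d_{i,1} : i \in I\}$ is algebraically independent over $R$.

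These are exactly the data used in the construction in the proof of Lemma~\ref{Unique Free}, now realized in $N$ rather than in an arbitrary algebraically closed overfield. I would define $\exp_{R'}(qb_i) = d_{i,q}$, extend additively, and let $R'$ be the subring of $N$ generated by $R$ and all the $d_{i,q}$. The argument of Lemma~\ref{Unique Free} applies verbatim (it never uses algebraic closedness of the ambient field, only the algebraic independence of the $d_{i,1}$ and coherence of the roots) to show that this $R'$ is a free partial E-ring extension of $R$. Since $R' \subseteq N$ and $N$, being real closed, is formally real, $R'$ is formally real. By the uniqueness part of Lemma~\ref{Unique Free}, the abstract free extension is isomorphic to this $R'$, and is therefore formally real as well.

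I expect the only genuine subtlety to be the two positivity claims: that the transcendence base $\{t_i\}$ can be chosen positive while remaining algebraically independent over $R$, and that the positive $n$th roots assemble into a coherent system $\{d_{i,q}\}$. Everything else is a direct transcription of the construction of Lemma~\ref{Unique Free} into a real closed ambient field, together with the standard facts that a formally real domain has a formally real fraction field and that every positive element of a real closed field has positive roots of all orders.
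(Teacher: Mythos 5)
Your proof is correct and follows essentially the same route as the paper: both arguments realize the free extension inside a real closed field by choosing the new transcendentals $d_{i,1}$ positive in some ordering and taking the coherent system of positive roots, then conclude formal reality because $R'$ embeds in a real closed field (with uniqueness of free extensions transferring this to any free extension). Your version merely makes explicit some steps the paper leaves implicit, namely the passage to the fraction field, the coherence check for positive roots, and the final appeal to the uniqueness clause of Lemma~\ref{Unique Free}.
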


\begin{proof}
Let $\{d_{i,q}: i \in I\}$ be as in the proof of Lemma ~\ref{Unique Free}.  Consider $R[\{d_{i,1}: i\in I\}]$, the ring extension of $R$ generated by $\{d_{i,1}: i\in I\}$.  This is a purely transcendental extension of $R$ and is thus formally real. If we extend an ordering on $R$ such that $d_{i,1}$ is positive for all $i\in I$, then any real closure of $R[\{d_{i,1}: i\in I\}]$ in a large algebraically closed field extension of $R[\{d_{i,1}: i\in I\}]$  will contain a consistent system of $n^{th}$ roots $\{d_{i,\frac{1}{n}}:i \in I, n \in \N\}$.  Thus, $R'$ is a subring of a real closed field and is thus formally real.
\end{proof}

\section{Countable Real Closed Exponential Fields}

The goal of this section is to prove Theorem 3.  Consider a chain of subrings of $\mK$ of the following form.

$$Q_0 \into Q_1 \into Q_2 \into \cdots$$

\noindent where $Q_0 \subseteq \Q^{alg}$ and $[Q_i \cup \exp(Q_i)] \subseteq Q_{i+1}  \subseteq [Q_i \cup \exp(Q_i)]^{alg}$. Let $\widetilde{Q} = \cup Q_i$.

\begin{defn}
Let $A \subseteq \widetilde{Q}$ be finite.  We say a set $D \subseteq \widetilde{Q}$ is an \emph{E-source of $A$} if for all $a\in A$,

\begin{enumerate}
\item $a \in (Q_0 \cup \exp(D))^{alg}$.
\item $\forall d\in D, d \in (Q_0 \cup \exp(D))^{alg}$.
\item $D$ is minimal such.
\end{enumerate}

\end{defn}

\noindent By the definition of $\widetilde{Q}$, E-sources always exist and are finite.  Furthermore, if $A \subseteq Q_i$ and $D$ is an E-source of $A$, then $D\subseteq Q_{i-1}$.

\begin{lemma}
E-sources are $\Q$-linearly independent.
\end{lemma}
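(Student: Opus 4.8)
The plan is to argue by contradiction, using the minimality clause in the definition of an E-source together with the elementary exponential fact recorded in the introduction. Suppose toward a contradiction that some E-source $D = \{d_1, \dots, d_n\}$ of $A$ is $\Q$-linearly dependent. Then there is a nontrivial relation $\sum_{i=1}^{n} q_i d_i = 0$ with $q_i \in \Q$; choosing an index $k$ with $q_k \neq 0$ and solving, we may write $d_k \in \langle D \setminus \{d_k\} \rangle$, the $\Q$-span of the remaining elements. Set $D^\ast = D \setminus \{d_k\}$, a proper subset of $D$.

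The key step is to show that replacing $D$ by the strictly smaller set $D^\ast$ does not change the relevant algebraic closure, that is,
$$(Q_0 \cup \exp(D))^{alg} = (Q_0 \cup \exp(D^\ast))^{alg}.$$
Since $d_k \in \langle D^\ast \rangle$, the elementary fact ``if $b \in \langle X\rangle$ then $\exp(b)$ is algebraic over $\exp(X)$'' applies with $b = d_k$ and $X = D^\ast$, so $\exp(d_k)$ is algebraic over $\exp(D^\ast)$ and hence lies in $(Q_0 \cup \exp(D^\ast))^{alg}$. Because $\exp(D) = \exp(D^\ast) \cup \{\exp(d_k)\}$, adjoining $\exp(d_k)$ leaves the algebraic closure unchanged, which gives the displayed equality.

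With the equality in hand I would verify that $D^\ast$ already satisfies conditions (1) and (2) in the definition of an E-source of $A$. For (1): each $a \in A$ lies in $(Q_0 \cup \exp(D))^{alg} = (Q_0 \cup \exp(D^\ast))^{alg}$. For (2): each $d \in D^\ast \subseteq D$ satisfies $d \in (Q_0 \cup \exp(D))^{alg} = (Q_0 \cup \exp(D^\ast))^{alg}$. Thus $D^\ast \subsetneq D$ meets conditions (1) and (2), contradicting the minimality of $D$ (condition (3)). Hence no E-source can be $\Q$-linearly dependent.

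I do not expect a genuine obstacle here: the whole argument reduces to the bookkeeping in the displayed equality, whose only nontrivial input is the exponential fact relating $\Q$-linear dependence to algebraicity of the corresponding exponentials. The single point deserving care is that deleting $d_k$ must preserve conditions (1) and (2) \emph{simultaneously}, which is precisely why establishing the equality of the two algebraic closures first is the right way to organize the proof.
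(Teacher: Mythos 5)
Your proof is correct and follows essentially the same route as the paper's: isolate an element $d_k$ in the $\Q$-span of the rest, use the exponential fact to see that $\exp(d_k)$ is algebraic over $\exp(D\setminus\{d_k\})$, conclude that dropping $d_k$ leaves the algebraic closure $(Q_0\cup\exp(D))^{alg}$ unchanged, and contradict minimality. Your write-up is in fact a bit more careful than the paper's (explicitly checking conditions (1) and (2) for the smaller set), but the idea is identical.
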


\begin{proof}
Suppose $D$ is a decomposition of $A$ and $D$ is not $\Q$-linearly independent.  Then there is $d \in D$ such that $d \in \langle D \ \{d\} \rangle  $ and $\exp(d)$ is algebraic over $\exp(D - \{d\})$.  If $c\in \mK$ is such that $c \in [Q_0\cup \exp(D)]^{alg}$, then in fact, $c\in [Q_0 \cup \exp(D- \{d\})]^{alg}$.  Thus, $D-\{d\}$ contains an E-source of $A$ which contradicts minimality.

\end{proof}

\begin{lemma}
$[Q_i \cup \exp(Q_i)] \cong Q_i'$.
\end{lemma}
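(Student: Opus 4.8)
The plan is to identify $S := [Q_i \cup \exp(Q_i)]$, equipped with the partial E-ring structure in which $A(S) := Q_i$ and $\exp_S$ is the restriction of $\exp_\mK$ to $Q_i$, as a \emph{free} partial E-ring extension of $Q_i$; the isomorphism $S \cong Q_i'$ is then immediate from the uniqueness half of Lemma~\ref{Unique Free}. Here the exponential domain of the partial E-ring $Q_i$ is $A(Q_i)$, the largest $\Q$-subspace $V \subseteq Q_i$ with $\exp(V) \subseteq Q_i$; note $Q_{i-1} \subseteq A(Q_i)$, since $\exp(Q_{i-1}) \subseteq Q_i$. Three of the four defining conditions are routine: $S$ is a domain containing $\Q$, hence a $\Q$-algebra with no zero divisors, and $\exp_S$ is a well-defined homomorphism $\langle Q_i,+\rangle \to \langle S^*,\cdot\rangle$ extending $\exp_{Q_i}$, so $S$ is a partial E-ring extension of $Q_i$ whose domain contains $Q_i$; and minimality holds because any partial E-subring $T$ of $S$ whose domain contains $Q_i$ must contain $\exp_T(Q_i) = \exp(Q_i)$, hence all of $[Q_i \cup \exp(Q_i)] = S$.

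The substance lies in the third condition: if $a_1,\dots,a_n \in Q_i$ are $\Q$-linearly independent over $A(Q_i)$, then $\exp(a_1),\dots,\exp(a_n)$ are algebraically independent over $Q_i$. I would argue by contradiction, using Schanuel's conjecture through the predimension $\delta$ and the E-source machinery. If the $\exp(a_j)$ were algebraically dependent over $Q_i$, that dependence would already hold over $\Q(B)$ for some finite $B \subseteq Q_i$. Let $D$ be an E-source of $\{a_1,\dots,a_n\} \cup B$. Then $D$ is finite and $\Q$-linearly independent (by the preceding lemmas), $D \subseteq Q_{i-1} \subseteq A(Q_i)$, and every element of $\{a_1,\dots,a_n\}\cup B$ lies in $\Q(\exp(D))^{alg}$.

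The key computation is that an E-source satisfies $\delta(D)=0$: from $D \subseteq \Q(\exp(D))^{alg}$ we get $\td(D \cup \exp(D)) = \td(\exp(D)) \le |D|$, while Schanuel gives $\td(D \cup \exp(D)) \ge |D|$, so $\td(\exp(D)) = |D|$ and $\delta(D)=0$. Now put $r := \Q$-l.d.$(D \cup \{a_1,\dots,a_n\}) - |D|$, the rank of the $a_j$ modulo $\langle D\rangle$, and $t := \td(\exp(a_1),\dots,\exp(a_n)/\Q(\exp(D)))$. Since each $a_j$ is algebraic over $\Q(\exp(D))$, a direct count yields $\delta(D \cup \{a_1,\dots,a_n\}) = t - r$, and Schanuel forces $t-r \ge 0$. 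On the other hand the assumed dependence, which persists over $\Q(\exp(D))$ because $B \subseteq \Q(\exp(D))^{alg}$, gives $t \le n-1$. Hence $r \le t \le n-1 < n$, so the $a_j$ are $\Q$-linearly dependent over $\langle D\rangle \subseteq \langle A(Q_i)\rangle$, contradicting their independence over $A(Q_i)$.

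I expect the main obstacle to be precisely this last step: one needs algebraic independence over \emph{all} of $Q_i$, not merely over a finite exponentially generated subfield, and $Q_i$ may have infinite transcendence degree. The device that resolves it is the E-source: localizing the hypothetical dependence to a finite $B$ and passing to an E-source of $\{a_1,\dots,a_n\}\cup B$ produces a $\delta$-zero, self-sufficient set $D$ that crucially lies in $Q_{i-1} \subseteq A(Q_i)$, so the linear dependence forced by the predimension inequality lands inside $A(Q_i)$ and contradicts the hypothesis. The only further points needing care should be checking that $A(Q_i)$ is genuinely a $\Q$-subspace containing $Q_{i-1}$, and the bookkeeping that replacing $B$ by $\Q(\exp(D))^{alg}$ preserves the algebraic dependence.
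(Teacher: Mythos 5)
Your proposal is correct and follows essentially the same route as the paper: both verify the freeness conditions for $[Q_i \cup \exp(Q_i)]$ over $Q_i$ by localizing a hypothetical algebraic dependence of the exponentials to a finite subset of $Q_i$, passing to an E-source $D \subseteq Q_{i-1}$, and using Schanuel to turn the resulting transcendence-degree deficit into a $\Q$-linear dependence landing inside $Q_{i-1}$, then concluding via uniqueness of free extensions. The only difference is presentational: you package the computation through the predimension $\delta$ (first showing $\delta(D)=0$, then $\delta(D \cup \{a_1,\dots,a_n\}) = t-r$), whereas the paper computes the transcendence degrees directly and applies Schanuel's conjecture in contrapositive form, carrying the $Q_0$-part of the E-source explicitly rather than absorbing it into $\Q^{\alg}$.
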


\begin{proof}

The statement of this lemma is \emph{a priori} puzzling, as it is not clear that $Q_i$ satisfies the domain condition of a partial E-subring of $\mK$.  However, in the following argument we prove that if $\{r_1,...,r_n\}\subset Q_i$ is $\Q$-linearly independent over $Q_{i-1}$, then $\{\exp(r_1,...,r_n)\}$ is algebraically independent over $Q_i$. This implies that the domain of $\exp_{Q_i}$ is exactly $Q_{i-1}$ and indeed, $Q_i$ is a partial E-ring extension of $Q_{i-1}$.  Since $[Q_{i-1} \cup \exp(Q_{i-1})]$ is the smallest subring of $Q_i$ to satisfy this, we have proven the lemma.

\bigskip

Let $\br \subset Q_i$ be $\Q$-linearly independent over $Q_{i-1}$.  Suppose $\exp(\br)$ is algebraically dependent over $Q_i$.
Then there is $\bs \subset Q_i$ such that $\exp(\br)$ is algebraically dependent over $\bs$ and $\br \subset \bs$.

Let $\bq \subset Q_0,\bt \in Q_{i-1}$ be such that $\{\bq,\bt\}$ is an E-source of $\bs$.  So each element of $\bt$ is algebraic over $\{\bq,\exp(\bt)\}$,
and each element of $\bs$ as well as each element of $\br$ is algebraic over $\{\bq,\exp(\bt)\}$.  Then $\exp(\br)$ is algebraically dependent over $\{\bq, \exp(\bt)\}$.  Thus $$\td(\bq,\bt,\br,\exp(\bq),\exp(\bt),\exp(\br))=$$ $$\td(\bq,\exp(\bq),\exp(bt)) + \td(\bt,\br,\exp(\br) / \bq,\exp(\bq),\exp(\bt))$$ $$\lneq |\bq| + |\bt| +|\br|.$$

Since $\mK$ satisfies Schanuel's conjecture, we conclude that $\{\bq,\bt,\br\}$ is $\Q$-linearly dependent.  Since $\{\bq,\bt\}$ is an E-source and thus $\Q$-linearly independent and a subset of $Q_{i-1}$, this implies that $\br$ is $\Q$-linearly dependent over $Q_{i-1}$.

\end{proof}

\begin{cor}
If $Q_i$ is formally real, then $[Q_i \cup \exp(Q_i)]$ is formally real.

\end{cor}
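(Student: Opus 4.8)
The plan is to reduce this to the two preceding lemmas, since all the real work has already been done. By the previous lemma we have a ring isomorphism $[Q_i \cup \exp(Q_i)] \iso Q_i'$, and its proof established more: it showed that $Q_i$ genuinely satisfies the domain condition of a partial E-ring, with $A(Q_i) = Q_{i-1}$, so that $Q_i'$ really is the free partial E-ring extension of $Q_i$ in the sense used above. I would open by recalling exactly this, so that the objects in the statement are identified with the objects to which the formal-reality lemma applies.

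Next I would check that $Q_i$ meets the hypotheses of the lemma asserting that the free extension of a formally real partial E-ring is formally real. Here $Q_i$ is a $\Q$-algebra with no zero divisors, being a subring of the characteristic-zero field $\mK$; it carries the partial exponential $\exp_{Q_i}$ with domain $Q_{i-1}$ by the previous lemma; and it is formally real by the hypothesis of the corollary. Applying that lemma with $R = Q_i$ then yields directly that $Q_i' = R'$ is formally real.

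Finally, formal reality is a property of the underlying commutative ring — namely that no nontrivial sum of squares vanishes — and hence is invariant under ring isomorphism. Transporting along $[Q_i \cup \exp(Q_i)] \iso Q_i'$ gives that $[Q_i \cup \exp(Q_i)]$ is formally real, as required. There is no real obstacle beyond bookkeeping: the one point deserving a word is the verification that $Q_i$ truly is a formally real partial E-ring whose partial exponential has domain exactly $Q_{i-1}$, but this is precisely what was proved inside the previous lemma, so the corollary follows simply by chaining the isomorphism with the formal-reality lemma and invoking isomorphism-invariance of formal reality.
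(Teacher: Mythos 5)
Your proof is correct and is exactly the argument the paper intends (the paper leaves this corollary without an explicit proof precisely because it follows by chaining the isomorphism $[Q_i \cup \exp(Q_i)] \iso Q_i'$ with the lemma that free extensions of formally real partial E-rings are formally real). Your added care in noting that the previous lemma establishes $A(Q_i)=Q_{i-1}$, so that $Q_i$ genuinely qualifies as a partial E-ring to which that lemma applies, matches the paper's own remark inside that lemma's proof.
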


\begin{cor}
Consider the chain $$Q_0 \into Q_1 \into Q_2 \into \cdots$$ where $Q_0 = \Q^{rc}$ and $Q_{i+1}$ is a real closure of $[Q_i \cup \exp(Q_i)]$.  Then the union $\widetilde{Q}$ is a real closed exponential subfield of $\mK$.

\end{cor}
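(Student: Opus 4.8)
The plan is to run an induction along the chain to keep every stage formally real, form the union, and then verify its three defining properties (that it is a field, that it is real closed, and that it is closed under $\exp$) one at a time. Each verification reduces to a finiteness/directedness observation, since any finite piece of data lives inside a single $Q_i$.

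First I would check that the chain is well-defined with each $Q_i$ a real closed subfield of $\mK$, by induction on $i$. The base case $Q_0 = \Q^{rc}$ is real closed. For the inductive step, assume $Q_i$ is real closed, hence formally real; then the Corollary stating that formal reality of $Q_i$ transfers to $[Q_i \cup \exp(Q_i)]$ gives that $[Q_i \cup \exp(Q_i)]$ is formally real. Thus it carries an ordering and admits a real closure $F$, which is an algebraic extension. Since $\mK$ is algebraically closed of characteristic zero and contains $[Q_i \cup \exp(Q_i)]$, it contains an algebraic closure of that ring, and $F$ embeds into $\mK$ over $[Q_i \cup \exp(Q_i)]$; I take $Q_{i+1}$ to be the image, a real closed subfield of $\mK$ containing $[Q_i \cup \exp(Q_i)]$.

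Next, set $\widetilde Q = \bigcup_i Q_i$. As a directed union of subfields of $\mK$ it is a subfield of $\mK$. For real closedness I would invoke the Artin--Schreier criterion, which is convenient here because it sidesteps any need to match up the orderings chosen at the various stages. The field $\widetilde Q$ is formally real: a hypothetical identity $-1 = \sum_j a_j^2$ involves only finitely many $a_j$, all lying in some $Q_n$, contradicting that $Q_n$ is formally real; in particular $\sqrt{-1} \notin \widetilde Q$. Moreover $\widetilde Q(\sqrt{-1})$ is algebraically closed: any polynomial over it has finitely many coefficients, so it lies in $Q_n(\sqrt{-1})[x]$ for some $n$, and $Q_n$ real closed makes $Q_n(\sqrt{-1})$ algebraically closed, so the polynomial already has a root in $Q_n(\sqrt{-1}) \subseteq \widetilde Q(\sqrt{-1})$. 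Hence $\widetilde Q$ is real closed.

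Finally, closure under the exponential is immediate from the construction: if $a \in \widetilde Q$ then $a \in Q_i$ for some $i$, so $\exp(a) \in \exp(Q_i) \subseteq [Q_i \cup \exp(Q_i)] \subseteq Q_{i+1} \subseteq \widetilde Q$. Since $\exp_{\mK}$ never vanishes, its restriction is a homomorphism $(\widetilde Q, +) \to (\widetilde Q^*, \cdot)$, so $\widetilde Q$ is a total exponential field, and therefore a real closed exponential subfield of $\mK$. I expect the only genuinely nontrivial point to be the inductive well-definedness of the chain, specifically the propagation of formal reality at each step; but that is exactly what the earlier Lemmas and Corollary supply, and everything after that is the routine observation that finite data lives in one $Q_i$.
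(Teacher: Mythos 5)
Your proposal is correct and follows essentially the same route as the paper: the paper states this corollary without proof precisely because it is the inductive combination of the preceding results (formal reality of $Q_i$ propagates to $[Q_i \cup \exp(Q_i)]$, so each real closure exists and can be taken inside $\mK$), followed by the routine observation that finite data lives in a single $Q_i$. Your Artin--Schreier verification of real closedness of the union and the explicit embedding of each real closure into $\mK$ are just careful write-ups of steps the paper leaves implicit.
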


\smallskip

In order to define the real closure of a formally real ring $R$, the order must be fixed.  We have shown that if $R$ is formally real, then $R'$ will be formally real and an element in $R'$ transcendental of $R$ can satisfy any positive cut over $\Q$ which is transcendental over $R$.  Notice that these positive transcendental cuts are actually types over the empty set.  Thus ,isomorphic real closed exponential fields must satisfy the same cuts over $\Q$ and every positive transcendental cut is satisfiable in some construction of a real closed exponential $\widetilde{Q}$.  Since a given $\widetilde{Q}$ is countable and can only satisfy countably many types, there are uncountably many non-isomorphic constructions of a countably real closed exponential field $\widetilde{Q}$.

\bigskip

First notice that $\exp_\mK$ is injective when restricted to any of these countable real closed exponential fields.  To see this, consider first the exponential map restricted to $\Q^{rc}$.  Schanuel's conjecture implies that the kernel is trivial.  Now consider an element $q \in Q_i$ where $q \notin Q_{i-1}$.  We have shown that $\exp_\mK(q)$ is transcendental over $Q_i$.  Thus, the kernel of the exponential map restricted to $\widetilde{Q}$ is trivial. At each stage we have shown the extension to be free and then we took the real closure.  If at stage $n$, we require that $\{d_{i,1}\}$ from the construction of the free extension has the same order type over the image of $\exp_{Q_{n-1}}$ as $\class{b_i}{i\in I}$ has over $A(Q_{n-1})$, then the exponential map will be order preserving, and since $\{d_{i,1}\}$ are algebraically independent over $Q_{n-1}$, we can do this.  Then, if you notice that at least when constructing $Q_1$ any positive cut can be satisfied, there are still continuum many real closed exponential subfields of $\mK$ each with an order preserving exponential map.

\section{Adding Logs}

In this section, we will prove by induction that we can construct the following chain of partial exponential rings:

\begin{diagram}[height=2em,width=2em]
    &&Q_1&&&&Q_2&&&&Q_3\\
    &\ruInto>{} &&\rdInto>{} &&\ruInto>{} &&\rdInto>{} &&\ruInto>{} &&\rdInto>{} &&\ruInto>{} &&\rdInto>{}  \\
    \widehat{Q}_0&&&&\widehat{Q}_1&&&&\widehat{Q}_2&&&&\widehat{Q}_3\cdots
  \end{diagram}

where $\widehat{Q}_0 = \Q^{rc}$

$$Q_{i+1} =  [\widehat{Q}_i \cup \exp (\widehat{Q}_i)]^{rc} $$

and

$$\widehat{Q}_{i+1} = [Q_{i+1} \cup \log(Q_{i+1}^{>0})]^{rc}. $$

Let $\widetilde{Q} = \cup Q_i$.

Similarly to the proofs we did earlier in this paper, the proof that at each stage of this construction the rings we are consider are formally real will rely on showing that we are essentially dealing with purely transcendental extensions.  In order to understand the construction of $\widetilde{Q}$, it is useful to know what the expected domain and image of the exponential map are at each stage and to keep track of notation.  We will show that

\begin{itemize}
\item  $[\widehat{Q}_i \cup \exp (\widehat{Q}_i)] \cong \widehat{Q}_i[E_{i+1}^\Q]$, the free extension $\widehat{Q}_i$.  Here, we are denoting the algebraically independent set $\class{d_{i,1}}{i\in I}$ from the construction of the free extension $E_{i+1}$, and the set $\{d_{i,q}: i\in I,q\in \Q\}$ from this stage of the construction we denote $E_i^\Q$,
\item $[Q_{i+1} \cup \log(Q_{i+1}^{>0})]\cong Q_{i+1}[L_{i+1}]$ where $L_{i+1}$ is a set which is algebraically independent over $Q_{i+1}$,
\end{itemize}

and that the domain and image of the map are as small as possible at each stage, i.e.,

\begin{itemize}
\item $\dom(\exp_{\widehat{Q}_i})$ is the $\Q$ additively linear span of $\widehat{Q}_{i-1} \cup L_i$.
\item $\img(\exp_{Q_i})$ is the $\Q$ multiplicative span of $Q_{i-1}^{>0} \cup E_i$.

\end{itemize}

\begin{defn}
Let $\bs \subset \widehat{Q}_n$.  We say $\{E,L\}:= \{\be_1,...,\be_n,\bl_1,...,\bl_{n} : \be_i \subset E_i, \bl_i \subset L_i\}$  is an \emph{LE-source of $\bs$} if

\begin{itemize}
\item For all $s \in \bs$, $s$ is algebraic over $\{E,L\}$.
\item For all $e \in e_i$ for $i=1,...,n$, $e$ is algebraic over $\{E,L\}$.
\item For all $l \in l_i$ for $i=1,...,n-1$, $l$ is algebraic over $\{E,L\}$.
\item $\{E,L\}$ is minimal such.
\end{itemize}

If $\bs \subset Q_n$, then we use the same definition but note that $\{E,L\}:= \{\be_1,...,\be_n,\bl_1,...,\bl_{n-1} : \be_i \subset E_i, \bl_i \subset L_i\}$, since we have not yet added the logs at the $n^{th}$ stage. This will be key in the proofs below.

\end{defn}

Notice now that $Q_1$ and $T_1$ exist by the proofs done at the end of the previous section.  Thus, we have a base case for the induction and we will assume for purposes of induction that we have carried out the construction though $Q_n$ or $\widehat{Q}_n$ and chosen an ordering at each stage so that all elements of $E_i^\Q$ are positive. Also notice that for $l\in L_i$, $\exp(l) \in Q_i$.  Similarly, for $e \in E_i$, $\log(e) \in \widehat{Q}_{i-1}$. By the induction assumption that we have carried out the construction up to and including $Q_n$ or $\widehat{Q}_n$, LE-sources exist, are finite as defined, and minimality guarantees that they are algebraically independent as sets.  We will need the following claim about LE-sources:

\begin{claim}
Let $\{E,L\}:= \{\be_1,...,\be_n,\bl_1,...,\bl_{n} : \be_i \subset E_i, \bl_i \subset L_i\}$ be an LE-source for some finite subset of $\widehat{Q}_n$.
Let $\bq = \log(\be_1) \subset \Q^{rc}$.  Then, the set $\{\bq,\bl_1,\log(\be_2),\bl_2,...,\bl_{n-1}, \log(\be_n),\bl_n\}$ is $\Q$- linearly independent.

\end{claim}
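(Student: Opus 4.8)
The plan is to prove $\Q$-linear independence directly, by taking an arbitrary rational relation
$$\sum_{i=1}^{n}\alpha_i\cdot\log(\be_i)+\sum_{i=1}^{n}\beta_i\cdot\bl_i=0,$$
where $\alpha_i,\beta_i$ are rational coefficient tuples paired with the tuples $\log(\be_i)$ and $\bl_i$ (and $\log(\be_1)=\bq$), and showing that every coefficient must vanish. The argument is a downward ``peeling'' induction on the stage index $i$, running from $i=n$ to $i=1$, alternating between two structural facts already available from the construction: the log-terms $\bl_i\subset L_i$ are algebraically independent over $Q_i$, and the exponential-preimage terms $\log(\be_i)$ form part of the $\Q$-basis of $\widehat{Q}_{i-1}$ over $A(\widehat{Q}_{i-1})=\dom(\exp_{\widehat{Q}_{i-1}})$ that is used to build the free extension giving $E_i$. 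Notably, no fresh Schanuel computation is needed here; the claim is bookkeeping on top of the independence facts already established for $E_i$ and $L_i$.

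First I would fix the nested containments that drive the peeling: $\log(\be_i)\in\widehat{Q}_{i-1}$, $\bl_i\in L_i\subseteq\widehat{Q}_i$, and the chain $\widehat{Q}_{i-1}\subseteq Q_i\subseteq\widehat{Q}_i$. To peel stage $n$, I first kill $\beta_n$: every term other than $\beta_n\cdot\bl_n$ lies in $Q_n$ (each $\log(\be_i)\in\widehat{Q}_{i-1}\subseteq Q_n$, and each $\bl_i$ with $i\le n-1$ lies in $\widehat{Q}_{n-1}\subseteq Q_n$), so the relation exhibits a rational linear combination of $\bl_n$ sitting inside $Q_n$; since $\bl_n\subseteq L_n$ is algebraically---hence $\Q$-linearly---independent over $Q_n$, this forces $\beta_n=0$. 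I then kill $\alpha_n$: recalling $\dom(\exp_{\widehat{Q}_{n-1}})=\langle\widehat{Q}_{n-2}\cup L_{n-1}\rangle$, all remaining terms ($\log(\be_i)\in\widehat{Q}_{i-1}\subseteq\widehat{Q}_{n-2}$ for $i\le n-1$, each $\bl_i$ with $i\le n-2$ in $\widehat{Q}_{n-2}$, and $\bl_{n-1}\in L_{n-1}$) lie in $\langle\widehat{Q}_{n-2}\cup L_{n-1}\rangle$, while $\log(\be_n)$ is a sub-tuple of the $\Q$-basis of $\widehat{Q}_{n-1}$ over exactly this domain, hence $\Q$-linearly independent over it; so $\alpha_n=0$.

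Iterating the same two moves downward kills $\beta_{n-1},\alpha_{n-1},\dots,\beta_1,\alpha_1$ in turn, the containment bookkeeping at stage $i$ being identical once one notes that the surviving terms all sit in $Q_i$ (for the $\bl_i$-step) or in $\dom(\exp_{\widehat{Q}_{i-1}})$ (for the $\log(\be_i)$-step). The base case is the final $\alpha_1$: here $A(\widehat{Q}_0)$ is trivial, so $\log(\be_1)=\bq$ is simply part of a $\Q$-basis of $\Q^{rc}=\widehat{Q}_0$, and the residual relation $\alpha_1\cdot\bq=0$ forces $\alpha_1=0$. I expect the only real obstacle to be this containment bookkeeping: at every peeling step one must verify that the ``other'' terms genuinely land in the subfield $Q_i$ or the subspace $\dom(\exp_{\widehat{Q}_{i-1}})$ over which the pivot tuple is independent, which in turn rests on correctly identifying $A(\widehat{Q}_{i-1})$ and on the fact that $\log(\be_i)$ is literally a sub-tuple of the free-extension basis.
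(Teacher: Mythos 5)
Your proof is correct and is essentially the paper's argument run in reverse: the paper builds the independent set upward by induction (adjoining $\bl_i$ using algebraic independence of $L_i$ over $Q_i$, then $\log(\be_{i+1})$ using its $\Q$-linear independence over $\dom(\exp_{\widehat{Q}_i}) \supseteq \langle \widehat{Q}_{i-1}\cup L_i\rangle$), while you peel coefficients off a putative relation from stage $n$ downward using exactly the same two independence facts and the same containment bookkeeping. The only cosmetic difference is that you justify the independence of $\log(\be_i)$ by identifying it as a sub-tuple of the free-extension basis of $\widehat{Q}_{i-1}$ over its exponential domain, whereas the paper deduces the same fact from the algebraic independence of $E_i$ over $\widehat{Q}_{i-1}$; both are immediate from the construction.
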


\begin{proof}

Suppose $n=1$. Then, by induction, since $E_1$ is algebraically independent and $\be_1 \subset E_1$, $\log(\be_1)$ must be $\Q$-linearly independent.  Since $\bl_1$ is algebraically independent over $Q_1$, and thus $\Q$-linearly independent over $Q_1$, $\{\bq,\bl_1\}$ is $\Q$-linearly independent.  Similary, if $\{\bq,\bl_1,...,\log(\be_i)\}$ is linearly independent, then since this set is in $\widehat{Q}_{i-1}$ and $L_i$ is algebraically independent over $Q_i$ and thus over $\widehat{Q}_{i-1}$, the set $\{\bq,\bl_1,...,\log(\be_i),\bl_i\}$ is $\Q$-linearly independent.

Now suppose the set up to $\bl_i$ is $\Q$-linearly independent. Then, since $E_{i+1}$ is algebraically independent over $\widehat{Q}_i$ and $\be_{i+1}\subset E_{i+1}$, we know that $\log(\be_{i+1})\subset \widehat{Q}_i$ is $Q$-linearly independent over the domain of the exponential map in $\widehat{Q}_i$.  Since the domain contains $\widehat{Q}_{i-1} \cup L_i$ and the set up to $\bl_i$ is contained in $\widehat{Q}_{i-1} \cup L_i$, we have that the set up to $\log(\be_{i+1})$ is $\Q$-linearly independent.

\end{proof}

We are now ready to prove that the construction can be extended from $Q_n$ to $\widehat{Q}_n$.

\begin{lemma}
 Notice that $Q_n^{>0}$ is a $\Q$-multiplicatively linear space since $Q_n$ is real closed and every positive element has a unique positive $n^{th}$ root. Suppose $\ba \in Q_n^{>0}$ is $\Q$- multiplicatively independent over $Q_{n-1}^{>0} \cup E_n$.  Then $\log(\ba)$ is algebraically independent over $Q_n$.

\end{lemma}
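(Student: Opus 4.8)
The plan is to mirror the strategy of the exponential analogue proved earlier in this section (that $\Q$-linear independence over $Q_{i-1}$ of $\br\subset Q_i$ forces algebraic independence of $\exp(\br)$ over $Q_i$), but run the argument through $\log$ instead of $\exp$ and invoke Schanuel's conjecture to produce a transcendence-degree contradiction. The first move is to translate the multiplicative hypothesis into an additive one: since $\log$ is an injective homomorphism from $(Q_n^{>0},\cdot)$ to $(\widehat{Q}_n,+)$ carrying the multiplicative span of $Q_{n-1}^{>0}\cup E_n$ onto the $\Q$-linear span of $\log(Q_{n-1}^{>0})\cup\log(E_n)$, the assumption that $\ba$ is $\Q$-multiplicatively independent over $Q_{n-1}^{>0}\cup E_n$ is equivalent to saying that $\log(\ba)$ is $\Q$-linearly independent over $\langle\log(Q_{n-1}^{>0})\cup\log(E_n)\rangle$.

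Next I would argue by contradiction. Suppose $\log(\ba)$ is algebraically dependent over $Q_n$; then there is a finite $\bs\subset Q_n$ with $\ba\subseteq\bs$ over which $\log(\ba)$ is already algebraically dependent, so $\td(\log(\ba)/\bs)\le|\ba|-1$. Fix an LE-source $\{E,L\}=\{\be_1,\dots,\be_n,\bl_1,\dots,\bl_{n-1}\}$ of $\bs$; recall it is algebraically independent as a set, that $\bs$ (hence $\ba$) is algebraic over it, and that each $\log(\be_i)$ and each $\exp(\bl_i)$ is algebraic over it. Now form
$$Z:=\{\log(\be_1),\dots,\log(\be_n)\}\cup\{\bl_1,\dots,\bl_{n-1}\}\cup\log(\ba).$$
By the Claim (in its $Q_n$-version, which omits $\bl_n$), the set $Z\minus\log(\ba)$ is $\Q$-linearly independent; moreover every element of $Z\minus\log(\ba)$ lies in $\log(Q_{n-1}^{>0})\cup\log(E_n)$, so by the first paragraph $\log(\ba)$ is $\Q$-linearly independent over $\langle Z\minus\log(\ba)\rangle$. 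Hence $Z$ is $\Q$-linearly independent, and $|Z|=|\{E,L\}|+|\ba|$ because $\log$ is injective.

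The core estimate is then a transcendence-degree count. Because $\exp(\log(\be_i))=\be_i\in\{E,L\}$, each $\exp(\bl_i)$ is algebraic over $\{E,L\}$, $\exp(\log(\ba))=\ba\subseteq\bs$ is algebraic over $\{E,L\}$, and each $\log(\be_i)$ is algebraic over $\{E,L\}$, every element of $Z\cup\exp(Z)$ lies in $(\{E,L\}\cup\log(\ba))^{\alg}$. Therefore $\td(Z\cup\exp(Z))\le|\{E,L\}|+\td(\log(\ba)/\{E,L\})\le|\{E,L\}|+(|\ba|-1)=|Z|-1$, where the middle inequality uses that $\{E,L\}$ is algebraically independent and that $\log(\ba)$ is algebraically dependent over $\bs\subseteq\{E,L\}^{\alg}$. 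This contradicts Schanuel's conjecture applied to the $\Q$-linearly independent tuple $Z$, which forces $\td(Z,\exp(Z))\ge|Z|$. The contradiction establishes that $\log(\ba)$ is algebraically independent over $Q_n$.

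The step I expect to be the main obstacle is verifying the containment $Z\cup\exp(Z)\subseteq(\{E,L\}\cup\log(\ba))^{\alg}$, specifically that each $\log(\be_i)$ and each $\exp(\bl_i)$ is algebraic over the LE-source; this is exactly where the closure clauses in the definition of LE-source must be used. Getting the index bookkeeping right is what makes the count land on $|Z|-1$ rather than $|Z|$: the $\bl_i$ run only to $n-1$ for a source of $\bs\subset Q_n$ while the $\be_i$ run to $n$, so the strict drop by one comes precisely from the assumed algebraic relation on $\log(\ba)$. A secondary point worth checking is the base case $n=1$, where $Q_{n-1}^{>0}=(\Q^{rc})^{>0}$ and the Claim degenerates.
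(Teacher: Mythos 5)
Your proof is correct and is essentially the paper's own argument: the same choice of $\bs$, the same LE-source $\{\be_1,\dots,\be_n,\bl_1,\dots,\bl_{n-1}\}$, the same Schanuel count on the set of logarithms together with its exponential image, and the same appeal to the Claim. The only difference is organizational --- you translate the multiplicative hypothesis into $\Q$-linear independence of $\log(\ba)$ up front and derive a contradiction with Schanuel's conjecture, whereas the paper applies Schanuel's conjecture in contrapositive form first and then contradicts the multiplicative independence hypothesis; these are the same argument.
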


\noindent This lemma will guarantee that $L_n$ exists as described and that $[Q_{n} \cup \log(Q_{n}^{>0})]\cong Q_{n}[L_{n}]$.

\begin{proof}
Suppose $\log(\ba)$ is algebraically dependent over $Q_n$.  Then there is $\bs \subset Q_n$ such that $\log(\ba)$ is algebraically dependent over $\bs$ and without loss of generality, we may assume each $a \in \ba$ is algebraic over $\bs$.  Let $\{E,L\}= \{\be_1,...,\be_n,\bl_1,...,\bl_{n-1} \} $ be an LE-source of $\bs$.  Consider $$\{\bq,\bl_1,\log(\be_2),\bl_2,...,\bl_{n-1}, \log(\be_n), \log(\ba), \be_1, \exp(\bl_1),..., \be_n, \ba\}$$ where $\bq = \log(\be_1) \subset \Q^{rc}$ and thus the second have of the set we are considering is the exponential image of the first half.  By definition of an LE-source, we compute

$$\td(\bq,\bl_1,\log(\be_2),\bl_2,...,\bl_{n-1}, \log(\be_n), \log(\ba), \be_1, \exp(\bl_1),..., \be_n, \ba ) < |\{E,L\}|+ |\ba|.$$

So, by Schanuel's conjecture, we have that 

$\{\bq,\bl_1,\log(\be_2),\bl_2,...,\bl_{n-1}, \log(\be_n), \log(\ba)\}$ is $\Q$-linearly dependent. By the claim, we know that

$\{\bq,\bl_1,\log(\be_2),\bl_2,...,\bl_{n-1},\log(\be_n)\}$ is $\Q$-linearly independent.  Thus, $\log(\ba)$ is $\Q$-linearly dependent over

$\{\bq,\bl_1,\log(\be_2),\bl_2,...,\bl_{n-1},\log(\be_n)\}$.  So $\ba$ is $\Q$- multiplicatively dependent over

$\{\be_1,\exp(\bl_1),,....,\exp(\bl_{n-1}),\be_n\} \subset Q_{n-1}^{>0} \cup E_n$.

\end{proof}

Thus, if $Q_n$ is formally real, then so is the purely transcendental extension $Q_{n}[L_{n}]$ and we can take the real closure as $\widehat{Q}_n$.

\bigskip

\bigskip

\noindent As in the previous section, the following lemma will guarantee that $[\widehat{Q}_n \cup \exp (\widehat{Q}_n)]$ is indeed the free extension of $\widehat{Q}_n$ and that the domain of the exponential map is precisely what we described above.

\begin{lemma}
Suppose $\ba \subset \widehat{Q}_n$ is $\Q$-linearly independent over $\widehat{Q}_{n-1} \cup L_n$.  Then $\exp(\ba)$ is algebraically independent over $\widehat{Q}_n$.

\end{lemma}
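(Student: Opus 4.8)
The plan is to run the argument by contradiction against Schanuel's conjecture, exactly mirroring the proof of the preceding logarithm lemma, with the roles of the domain and exponential sides interchanged. Suppose $\exp(\ba)$ were algebraically dependent over $\widehat{Q}_n$. Then there is a finite $\bs \subset \widehat{Q}_n$ over which $\exp(\ba)$ is already algebraically dependent, and after enlarging $\bs$ I may assume each $a \in \ba$ is algebraic over $\bs$. I would then fix an LE-source $\{E,L\} = \{\be_1,\dots,\be_n,\bl_1,\dots,\bl_n\}$ of $\bs$ (the stage-$n$ logarithms are present, since $\bs \subset \widehat{Q}_n$) and set $\bq = \log(\be_1) \subset \Q^{rc}$. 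Write $m = |\{E,L\}|$ and $k = |\ba|$; the LE-source is algebraically independent, so $\td(\{E,L\}) = m$.

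The Schanuel tuple I would use pairs the domain side $X \cup \ba$, where
$$X = \{\bq,\bl_1,\log(\be_2),\bl_2,\dots,\log(\be_n),\bl_n\},$$
with its exponential image $\exp(X)\cup\exp(\ba) = \{\be_1,\exp(\bl_1),\dots,\be_n,\exp(\bl_n)\}\cup\exp(\ba)$. By the Claim, $X$ is $\Q$-linearly independent, so $|X| = m$; moreover $X \subset \langle \widehat{Q}_{n-1}\cup L_n\rangle$, and since $\ba$ is $\Q$-linearly independent over $\widehat{Q}_{n-1}\cup L_n$ by hypothesis, the set $X\cup\ba$ is $\Q$-linearly independent of size $m+k$. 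Schanuel's conjecture therefore forces
$$\td(X,\ba,\exp(X),\exp(\ba)) \ge m+k.$$

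Against this I would establish the reverse strict inequality by a transcendence count over the source. The source $\{E,L\}$ sits inside $X\cup\exp(X)$ — the $\be_i$ lie in $\exp(X)$ and the $\bl_i$ in $X$ — so it contributes exactly $m$. The remaining generators are the partners $\log(\be_i)$ and $\exp(\bl_i)$, the tuple $\ba$, and $\exp(\ba)$. Relying on the source property that each partner is algebraic over $\{E,L\}$, together with the fact that $\ba$ is algebraic over $\bs\subseteq\overline{\{E,L\}}$, these add nothing. Finally $\exp(\ba)$ is algebraically dependent over $\bs$, and since $\overline{\{E,L\}}\supseteq\overline{\bs}$ this yields $\td(\exp(\ba)/\{E,L\}) \le \td(\exp(\ba)/\bs) < k$. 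Combining, $\td(X,\ba,\exp(X),\exp(\ba)) < m+k$, contradicting the Schanuel bound; hence $\exp(\ba)$ is algebraically independent over $\widehat{Q}_n$.

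The main obstacle is precisely the transcendence count of the third paragraph, and within it the assertion that the partners $\log(\be_i)$ and $\exp(\bl_i)$ are algebraic over the \emph{finite} source $\{E,L\}$. This is the LE-source analogue of the property used for E-sources (where each source element is algebraic over the base together with the exponentials of the source), and it is what makes the whole count collapse to the single deficiency produced by $\exp(\ba)$. Justifying it requires the inductive description of the tower — that each passage $\widehat{Q}_{i-1}\to Q_i\to\widehat{Q}_i$ is free/generic, with $\dom(\exp_{\widehat{Q}_i}) = \langle\widehat{Q}_{i-1}\cup L_i\rangle$ and $\img(\exp_{Q_i})$ the multiplicative span of $Q_{i-1}^{>0}\cup E_i$ — so that the exponential or logarithmic partner of a source element is pinned down algebraically by the source itself. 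Keeping this bookkeeping between the $E_i$ and the $L_i$ consistent across the different stages is the delicate part of the argument.
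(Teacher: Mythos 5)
Your proposal is correct and follows essentially the same route as the paper's own proof: reduce to a finite $\bs$, take an LE-source $\{E,L\}$ of $\bs$, pair the domain-side tuple $X \cup \ba$ with its exponential image, and play the transcendence count $\td < |\{E,L\}| + |\ba|$ off against Schanuel's conjecture, using the Claim plus the hypothesis that $\ba$ is $\Q$-linearly independent over $\widehat{Q}_{n-1} \cup L_n$. The only difference is bookkeeping: the paper applies Schanuel contrapositively (deficiency forces $\Q$-linear dependence of $\ba$ over $X \subset \widehat{Q}_{n-1} \cup L_n$, contradicting the hypothesis), while you establish linear independence first and contradict the resulting lower bound — the same argument, run in the opposite direction.
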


\begin{proof}

Suppose $\exp(\ba)$ is algebraically dependent over $\widehat{Q}_n$. Then there is $\bs \subset \widehat{Q}_n$ such that $\exp(\ba)$ is algebraically independent over $\bs$ and we may assume without loss of generality that each $a \in \ba$ is algebraic over $\bs$.  Let $\{E,L\}= \{\be_1,...,\be_n,\bl_1,...,\bl_{n}\}$ be an LE-source of $\bs$.  Now, where $\bq = \log(\bt_1) \subset \Q^{rc}$, we have $$\td(\bq,\bl_1,\log(\be_2),\bl_2,...,\log(\be_n),\bl_n,\ba,\be_1,\exp(\bl_1),....,\be_n,\exp(\bl_n),\exp(\ba))$$
$$|\{E,L\}| + |\ba|.$$

So, by Schanuel's conjecture, $\ba$ is $\Q$-linearly dependent over 

$\{\bq,\bl_1,\log(\be_2),\bl_2,...,\log(\be_n),\bl_n\} \subset \widehat{Q}_{n-1} \cup L_n$.

\end{proof}

Thus, if $\widehat{Q}_n$ is formally real, then so is the free extension $[\widehat{Q}_n \cup \exp (\widehat{Q}_n)]$ and we can take the real closure to get $Q_{n+1}$.  This completes the proof that the chain exists as described and that at each stage the domain and image of the exponential map are precisely the minimal possible set.

\bigskip

\bigskip

\noindent To finish the proof of the theorem, notice that at each stage we are adding transcendental elements.  If we make the $L_i$ satisfy the same order type over the previous domain as their exponential image satisfies over the previous image, and make the $E_i$ satisfy the same order type over the previous image as their preimage satisfies over the previous domain, the exponential map will be order preserving.  As there are clearly continuum many positive cuts that can be satisfied when constructing $Q_1$ and only countably many are satisfied in any one construction, we have proven the theorem.

\end{document}